\documentclass[onefignum,onetabnum]{siamart190516}
\usepackage{graphicx}
\usepackage{amsmath}
\usepackage{amssymb}
\usepackage{color}
\usepackage{verbatim}
\usepackage{tikz}
\usepackage{subfig}
\usepackage{url}
\usepackage{enumerate}
\usepackage{tabularx}
\usepackage[multiple]{footmisc}
\usepackage[utf8]{inputenc}
\usepackage{lineno}

\usetikzlibrary{shapes.multipart}

\DeclareMathOperator{\abs}{sabs}
\DeclareMathOperator{\sheavi}{sH}

\newtheorem{remark}{Remark}

\begin{document}

\title{Continuation method for PDE-constrained global optimization: Analysis and application to the shallow water equations}

\author{J. H. Baayen\thanks{KISTERS Nederland, St. Jacobsstraat 123-135, 3511 BP Utrecht, The Netherlands (\email{jorn.baayen@kisters-bv.nl})}
\and T. Piovesan\thanks{Deltares, Boussinesqweg 1, 2629 HV Delft, The Netherlands}
\and J. VanderWees\footnotemark[1]}

\maketitle

\begin{abstract}
This paper shows how a class of non-convex optimization problems constrained by discretized nonlinear partial differential equations may be solved to global
optimality using an interior point continuation method.  The solution procedure rests on a nested homotopy.  
The inner homotopy solves a barrier problem by driving the barrier parameter to zero. The outer homotopy deforms a convex relaxation
to the original non-convex problem in a way that stays clear of bifurcations.  A requirement for global optimality is that the objective is convex and that the search space remains
path-connected.  As a case study, a class of real-world optimization problems subject to the shallow
water equations is analyzed. A benchmark as well as a practical implementation demonstrate that the approach is suitable for closed-loop non-convex model predictive control
of large-scale cyber-physical systems.
\end{abstract}

\begin{keywords}
optimal control, partial differential equations, homotopy continuation, bifurcation analysis, non-convex programming, interior point methods, global optimization
\end{keywords}

\begin{AMS}
37G10, 49J20, 49K40, 49N60, 65H20, 90C25, 90C26, 90C51
\end{AMS}

\section{Introduction}

Optimization problems constrained by discretized nonlinear partial differential equations arise
in the context of numerical optimal control of cyber-physical systems, such as river systems
with hydropower cascades \cite[e.g.]{ackermann2000real,Schwanenberg2015}.  In general, these non-convex problems cannot be solved to global optimality by a naive application
of an interior point method.  They can, however, be solved to global optimality using polynomial hierarchies
\cite{lasserre2001global}, or using a homotopy continuation method that tracks all zeroes of a deforming system of polynomials \cite{bates2013numerically}.
Both of these methods suffer from high computational complexity and cannot be applied to large problems
in a closed-loop setting with tight limits on computation time.

In this paper, we look at the general homotopy continuation method \cite[e.g.]{allgower2012numerical} from a different angle.  Instead of tracking zeroes
of a polynomial as in \cite[e.g.]{bates2013numerically, mehta2016numerical}, we set up a homotopy between a convex relaxation and the original non-convex problem.  In this way,
the number of variables of the optimization problem does not increase (as they would with, e.g., a Lasserre hierarchy), and we 
may restrict our attention to the tracking of a single solution.  The resulting method is therefore readily applied to problems with a large number of variables\footnote{The benchmark in \cite{Baayen2019-3} covers
problems with up to $500\,000$ variables.}.


We will now give a brief overview of this method. Let $\theta \in [0,1]$ be the deformation parameter, where $\theta=0$ corresponds to the convex approximation of the non-convex optimization problem and $\theta=1$ to the original non-convex problem.
By construction, the approximated convex problem only admits global optima.  Let $x_{cp}$ denote such a global optimum, and let $\mathcal{S}$ denote the space of all possible solutions for all $\theta \in [0,1]$.  
We describe a procedure to find an optimal solution of each stage of the deformation, starting from $x_{cp}$.
That is, we construct a well-behaved ``problem-to-solution'' function $f:[0,1] \to \mathcal{S}$ where any $f(\theta)$ is an optimal solution to the corresponding optimization problem deformed by $\theta$ from $f(0) = x_{cp}$.
Here well-behaved is taken to mean that the function is continuous and does not contain any singularities.  Singularities would produce bifurcations and other undesired behavior \cite{poore1987bifurcation,guddat1990parametric}. These basic properties allow us to derive a method to find a solution for the non-convex problem, $f(1)$, starting from an optimal solution of the convex approximation, $f(0)$,
by tracing a uniquely defined path of solutions as $\theta$ is taken from zero to one.   These properties also allow us to prove that the solution at the end of the path, at $\theta=1$, is a global optimum.
In Section \ref{sec:def}, we formally describe this approach and provide sufficient conditions to ensure that the path exists, is unique, and that the problem does not admit any other solutions. 
The results hinge on two newly defined notions: \emph{zero-convexity} and \emph{path-stability}. 

In Section \ref{sec:application}, we consider an application of the homotopy method to the shallow water equations.
These equations occur when setting up decision support systems for river and canal systems, such as those managed by Rijnland water authority 
in the area around the city of Leiden and Amsterdam Schiphol Airport in the Netherlands (the area covers approximately $1 175$ km$^2$). At Rijnland, path-stable continuation is in day-to-day use for closed-loop model predictive control of 
$4$ primary pumping stations to control water levels and water quality in the primary canal system with a total length of approximately $370$ km \cite{Bosbo1, baayen2019overview}.


\section{Background}\label{sec:def}

A general continuous optimization problem can be formulated in the following standard manner:
\begin{align}
\min_x f(x) & \quad \text{subject to} \nonumber \\
c(x) & = 0, \tag{$\mathcal P$}\label{def:optimization-problem-bis}\\
x_i & \geq 0 \quad i \in \{1,\ldots,m\}, \nonumber \\ 
x &  \in \mathbb{R}^n. \nonumber
\end{align}
Note that we make a distinction between bounded, nonnegative variables and unbounded variables.

We assume throughout that all the functions are twice continuously differentiable.  We will refer to such functions as being (sufficiently) smooth. Let $f$ denote the \emph{objective function} of problem (\ref{def:optimization-problem-bis}).

\subsection{Interior point methods}

Interior point methods are used to find local minima of general optimization problems \cite{wright1997primal, renegar2001mathematical, forsgren2002interior, Wachter2006}. We will briefly mention some notions that we will need for our purposes. 
The general idea is to find a solution by computing (approximate) solutions for a sequence of barrier problems.
A barrier problem, for a parameter $\mu >0$, is defined as:
\begin{align}
\min_x f(x) - \mu \sum_{i=1}^m \ln x_i &  \quad \text{subject to} \nonumber\\
c(x) & = 0, \tag{$\mathcal P_{\mu}$}\label{def:barrier-optimization-problem}\\
x  & \in \mathbb{R}^n. \nonumber
\end{align}
This reformulation allows us to remove the non-negativity constraints on the variables.  As long as the algorithm starts with strictly positive $x_i$, $i \in \{1,\ldots,m\}$, the logarithmic barrier terms in the objective function will ensure that the solution coordinates remain
strictly positive.  Furthermore, if $f$ is a convex function and $c$ linear, then the reformulation also turns the convex optimization problem
(\ref{def:optimization-problem-bis}) into a strictly convex optimization problem (\ref{def:barrier-optimization-problem}) with a unique solution.
Generally speaking, for any sequence of barrier parameter values $\mu$ converging to zero, the sequence of the corresponding solutions to the problems (\ref{def:barrier-optimization-problem}) converges to a solution of (\ref{def:optimization-problem-bis}) (see \cite{forsgren2002interior}).

The objective function in the barrier problem (\ref{def:barrier-optimization-problem}) is only defined for \emph{interior points}:

\begin{definition}
Consider the barrier problem (\ref{def:barrier-optimization-problem}).  
A point $x$ is called a \emph{feasible interior point} if $x_i > 0$ for all $i \in \{1,\ldots,m\}$ and if it satisfies the constraints $c(x)=0$.
\end{definition}

For a generic optimization problem, the standard strategy to find a local minimum is to use the method of the Lagrange multipliers. 
The \emph{Lagrangian} of the problem (\ref{def:optimization-problem-bis}) is \cite{floudas1995nonlinear, geiger2013theorie}:
$$\mathcal L (x, \lambda) := f(x) + \lambda^T c(x)$$ where $\lambda$ is the vector of Lagrangian multipliers. 
Any local minimum of (\ref{def:optimization-problem-bis}) is a solution to the system of equations
\begin{eqnarray}\label{eq:primal}
\nabla_{x} \mathcal L (x, \lambda)  & = & 0, \label{eq:primal1}\\
c(x) & = & 0. \nonumber \label{eq:primal2}
\end{eqnarray}

\subsection{Parametric programming}

A parametric optimization problem is a particular type of optimization problem where the objective and constraint functions depend on a parameter $\theta \in [0,1]$:
\begin{align}
\min_x f(x,\theta) & \quad  \text{subject to}  \nonumber\\
c(x,\theta)  & = 0, \tag{$\mathcal P^{\theta}$}\label{def:parametric-optimization-problem}\\
x_i & \geq 0 \quad i \in \{1,\ldots,m\}, \nonumber \\ 
x &  \in \mathbb{R}^n, \nonumber
\end{align}
where $x$ is the optimization variable, $f(x,\theta)$ is the objective function and $c(x,\theta)$ denotes the constraints. 

The main idea of this paper is to continuously deform an optimization problem, such that ($\mathcal P^{\theta = 0}$) is a convex problem and ($\mathcal P^{\theta = 1}$) is the original non-convex problem, and track the corresponding solution $x^*(\theta)$.
From the previous discussion, we know that any solution of (\ref{def:parametric-optimization-problem}) is the solution of a system of equations. Thus, we can equivalently consider the continuous deformation of a system of equations and track its solution.

\subsection{Continuation methods}\label{sec:cont}

Here we provide a brief overview of the classical continuation method \cite{allgower2012numerical}. 
Let $F: \mathbb{R}^n \to \mathbb{R}^n$ denote the residual function for a system of nonlinear equations of the form
\[
F(x)=0.
\]
In general, finding a solution $x^*$ such that $F(x^*)=0$ is a hard problem. If an initial guess $x_0$ is sufficiently close to a solution and the function $F$ satisfies certain regularity properties, the Newton-Raphson method will converge to $x^*$. But if $x_0$ is too far away, the Newton-Raphson method may diverge and a different approach is needed.

The continuation method is one such approach and we will now sketch the idea behind it.  One approximates the residual function $F$ with a suitable function $\tilde{F}$, for which a solution $\tilde x^*$
is known:
\[
\tilde{F}(\tilde x^*) = 0.
\]
A parameter $\theta$ is then introduced to deform $\tilde{F}$ into $F$ using the homotopy
\begin{equation}\label{eq:simple-homotopy}
G(x,\theta):=(1-\theta)\tilde{F} + \theta F
\end{equation}
as $\theta$ goes from $0$ to $1$. 
With $\tilde x^*$ given such that $\tilde{F}(\tilde x^*)=G(\tilde x^*,0)=0$, we can increase $\theta$ and solve $G(x,\theta)=0$ for $x$ starting from $\tilde x^*$, which, if the increase in $\theta$ was sufficiently small, will lie sufficiently close to the solution of $G(x,\theta)=0$ for the Newton-Raphson method to converge. Continuing in this way, under suitable conditions, we arrive at a solution $x^*$ such that $F(x^*)=G(x^*,1)=0$.  

In the process, we have traced a path $\theta \mapsto x(\theta)$. By the implicit function theorem \cite[Theorem 9.28]{rudin1964principles}, this path exists locally and uniquely, and is continuously differentiable, as long as $\partial G / \partial x$ is nonsingular. 
This motivates the following definition:

\begin{definition}
Consider the homotopy (\ref{eq:simple-homotopy}).  
A point $x$ is called \emph{singular} if the Jacobian matrix $\partial G / \partial x$ is singular at $x$.
\end{definition}

At singular points, the path may (1) turn back on itself, (2) end, or (3) bifurcate into multiple paths. Figure~\ref{fig:1} illustrates a path with a bifurcation, and highlights the point where $\partial G / \partial x$ is singular.  Clearly, this situation is undesirable and in the following we will look for conditions under which all points are nonsingular.

\begin{figure}
\centering
  \includegraphics[scale=0.5]{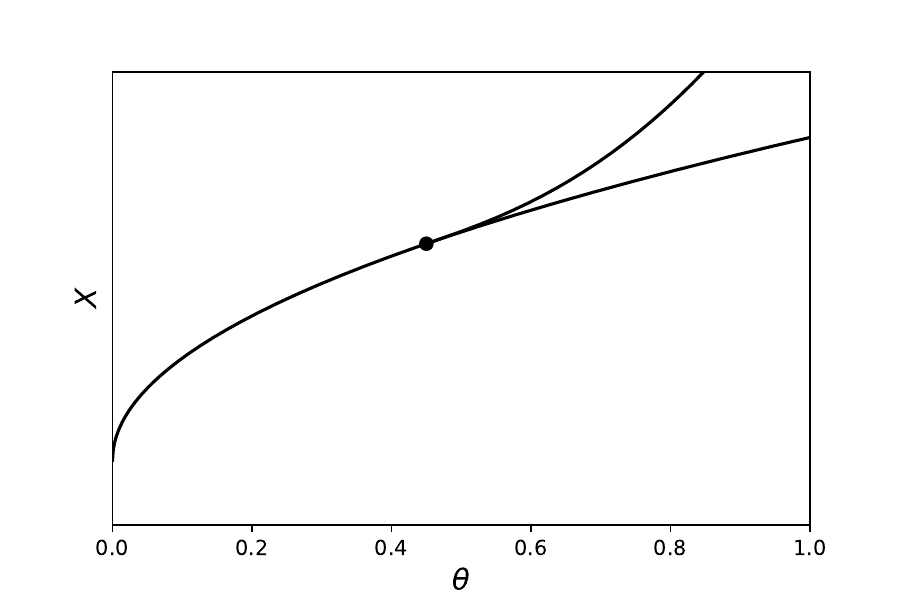}
\caption{A path with a bifurcation. The singular point is highlighted.}
\label{fig:1}       
\end{figure}

\section{Continuation method for global optimization}


Our objective is to construct, for any $\mu > 0$ and any $\theta \in [0,1]$, an optimization problem (\ref{def:parametric-optimization-problem}) such that we can track its solution
using a continuation method. 
Using interior point methods, a problem (\ref{def:parametric-optimization-problem}) is solved by equivalently finding the solution of a system of equations (\ref{eq:primal1}).
Continuation theory guarantees that the path of the solution of the system of equations does not bifurcate (i.e., it can be traced) if the Jacobian of the residual function is not singular.

More formally, we want to track the solution of the system of equations 
\begin{eqnarray}\label{eq:primal-bar}
\nabla_{x} \mathcal L_\mu (x, \lambda, \theta)  & = & 0, \label{eq:primal1-bar}\\
c(x,\theta) & = & 0. \nonumber
\end{eqnarray}
where $\mathcal L_\mu (x, \lambda, \theta) := f(x, \theta) - \mu \sum_{i=1}^m \ln x_i + \lambda^T c(x,\theta)$ is the Lagrangian of the following parametric barrier problem:
\begin{align}
\min_x f(x, \theta) - \mu \sum_{i=1}^m \ln x_i &  \quad \text{subject to} \nonumber\\
c(x,\theta) & = 0, \tag{$\mathcal P_{\mu}^{\theta}$}\label{def:parametric-barrier-optimization-problem}\\
x  & \in \mathbb{R}^n. \nonumber
\end{align}
Let $F_\mu(x,\lambda,\theta)$ denote the residual of the system of equations (\ref{eq:primal-bar}). Then the system
\[
F_\mu(x,\lambda,\theta)=0
\]
admits a unique solution path in a neighborhood of $x^*$, $\lambda^*$, and $\theta^*$ if the Jacobian matrix $\partial F_\mu / \partial (x,\lambda)$ is nonsingular at the point $(x^*,\lambda^*,\theta^*)$.

In the following, we will need the notion of the tangent space of the constraint manifold:
\begin{definition}[e.g., \cite{poore1987bifurcation}]
Fix a $\theta \in [0,1]$ and a feasible interior point $x$. We call the linear space
\[
T(x,\theta):=\{y : \nabla_x c(x,\theta) y =0\}
\]
the \emph{tangent space} of the constraint set $c(x,\theta)=0$ at $x$.
\end{definition}

\subsection{Sufficient conditions for convergence to a global optimum}

We are now ready to discuss sufficient conditions for the path tracing procedure to converge to a global optimum.  To do so, we will need to introduce two new notions: \emph{zero-convexity} and \emph{path-stability}.

\begin{definition}\label{def:zero-convex}
We say that the parametric optimization problem (\ref{def:parametric-optimization-problem}) is \emph{zero-convex} if the objective function $x \mapsto f(x,0)$ is a convex function, and the constraints $x \mapsto c(x,0)$ are linear.
\end{definition}

The notion of zero-convexity captures the idea that there should be a unique solution at $\theta=0$, and that it should be possible to find this solution using standard methods.  

\begin{definition}\label{def:path-stable}
We say that the parametric optimization problem (\ref{def:parametric-optimization-problem}) is \emph{path-stable with respect to the interior point method}
if its barrier formulation (\ref{def:parametric-barrier-optimization-problem}) does not admit singular feasible points for any $\mu > 0$ and any $\theta \in [0,1]$.
\end{definition}

The concept of \emph{path-stability} captures the idea that we seek a way to consistently arrive at a uniquely related local minimum of the fully nonlinear problem at $\theta=1$, i.e., 
without path bifurcations along the way.  

The following Proposition provides an important characterization of path-stability:

\begin{proposition}\label{theorem:tiahrt}
Consider the parametric optimization problem (\ref{def:parametric-barrier-optimization-problem}).  
Fix a $\mu > 0$ and a $\theta \in [0,1]$. Let $x$ denote a feasible point.
The point $x$ is nonsingular if and only if the following two conditions hold:
\begin{enumerate}
\item The Hessian matrix $\nabla_{xx}^2 \mathcal{L}_{\mu}(x,\lambda,\theta)$ is nonsingular on the tangent space $T(x,\theta)$;
\item The Jacobian matrix of the constraints $\nabla_x c(x,\theta)$ has full rank. 
\end{enumerate}
\end{proposition}
\begin{proof}
Consider the Jacobian matrix
\[
\frac{\partial F_\mu}{\partial (x,\lambda)}=\nabla^2_{xx} \mathcal{L}_{\mu}(x,\lambda,\theta).
\]
Define a local basis $e_1,\ldots,e_\ell$ that spans the tangent space $T(x,\theta)$, and a basis $e_{\ell+1},\ldots,e_n$ that spans
its orthogonal complement.  With respect to these bases, the Jacobian matrix has the form
\[
\begin{pmatrix}
\nabla_{e_{1},\ldots,e_\ell;e_{1},\ldots,e_\ell}^2 \mathcal{L}_{\mu} & \nabla_{e_{1},\ldots,e_\ell;e_{\ell+1},\ldots,e_n}^2 \mathcal{L}_{\mu} & 0 \\
\nabla_{e_{1},\ldots,e_\ell;e_{\ell+1},\ldots,e_n}^2 \mathcal{L}_{\mu} & \nabla_{e_{\ell+1},\ldots,e_n;e_{\ell+1},\ldots,e_n}^2 \mathcal{L}_{\mu} & \nabla^T_{e_{\ell+1},\ldots,e_n} c \\
0 & \nabla_{e_{\ell+1},\ldots,e_n} c & 0 \\
\end{pmatrix},
\]
since $d c(x,\theta) / d e_i=\nabla_x c(x,\theta) e_i=0$ if $e_i \in T(x,\theta)$.
If (2) does not hold, then $\nabla_{e_{\ell+1},\ldots,e_n} c$ cannot have full row rank, whence the Jacobian matrix must be singular.
If (2) holds, $\nabla_{e_{\ell+1},\ldots,e_n} c$ is a nonsingular square matrix so that elementary row and column operations can transform the Jacobian matrix to
a matrix of the form
\[
\begin{pmatrix}
\nabla_{e_{1},\ldots,e_\ell;e_{1},\ldots,e_\ell}^2 \mathcal{L}_{\mu} & 0 & 0 \\
0 &0  & \nabla^T_{e_{\ell+1},\ldots,e_n} c \\
0 & \nabla_{e_{\ell+1},\ldots,e_n} c & 0 \\
\end{pmatrix},
\]
which is nonsingular if and only if condition (1) holds.
We have shown that the Jacobian matrix is nonsingular if and only if conditions (1) and (2) hold.
\end{proof}

\begin{remark}
Condition (1) in Proposition \ref{theorem:tiahrt} is equivalent to the condition that the \emph{reduced Hessian} \cite[e.g.]{fletcher2013practical, geiger2013theorie} 
\[
Z(x,\theta)^T \nabla_{xx} \mathcal{L}_{\mu}(x,\lambda,\theta) Z(x,\theta)
\]
is positive definite, with $Z(x,\theta)$ a basis matrix of $T(x,\theta)$.
\end{remark}

\begin{remark}
Condition (2) in Proposition \ref{theorem:tiahrt} is also known as the \emph{linear independence constraint qualification} (LICQ) \cite{floudas1995nonlinear, geiger2013theorie}.
\end{remark}

\begin{remark}
For an example of a problem where the Hessian of the Lagrangian is singular on the tangent space, consider the two-dimensional toy problem: $\min_x x_1^2+x_2^2$ subject to the constraint $x_1^2+x_2^2=1$.  Note how every feasible
point of the toy problem is a non-strict local minimum.
\end{remark}

\begin{lemma}\label{lemma:posdef}
Fix a $\mu > 0$ and a $\theta \in [0,1]$.  If the parametric optimization problem (\ref{def:parametric-optimization-problem}) is zero-convex and path-stable with
respect to the interior point method, then the Hessian matrix 
\[
\nabla_{xx}^2 \mathcal{L}_{\mu}(x,\lambda,\theta)
\]
is positive definite on the tangent space $T(x,\theta)$ for all feasible interior points $x$, and all $\lambda$.
\end{lemma}

\begin{proof}
Fix a $\mu > 0$. For $\theta=0$, path-stability implies that $\nabla_{xx}^2 \mathcal{L}_{\mu}(x,\lambda,0)$ is nonsingular
on the tangent space $T(x,\theta)$ (Proposition \ref{theorem:tiahrt}).  Therefore, the eigenvalues
of $\nabla_{xx}^2 \mathcal{L}_{\mu}(x,\lambda,0)$ on the tangent space cannot be zero.
Zero-convexity, together with the strict convexity of the barrier terms and the linearity of the constraints,
implies that the eigenvalues of $\nabla_{xx}^2 \mathcal{L}_{\mu}(x,\lambda,0)$ (which are real as per the spectral theorem for symmetric matrices) cannot be negative.
Therefore, the matrix $\nabla_{xx}^2 \mathcal{L}_{\mu}(x,\lambda,0)$ must be positive definite on the tangent space $T(x,\theta)$.

For $\theta > 0$, note that the eigenvalues -- being the roots of the characteristic polynomial -- vary continuously with $\mu$, $\theta$,
$x$, and $\lambda$ \cite{harris1987shorter} since $\mathcal{L}_{\mu}$ is twice continuously differentiable.  As such, a negative eigenvalue
can only arise if there would exist a $\theta > 0$ and $(x,\lambda)$ such that $\nabla_{xx}^2 \mathcal{L}_{\mu}(x,\lambda,\theta)$
would have a zero eigenvalue on the tangent space $T(x,\theta)$, and hence would be singular there.  But this would contradict the assumption of path-stability
due to Proposition \ref{theorem:tiahrt}.
\end{proof}

\begin{remark}
Lemma \ref{lemma:posdef} shows that the reduced Hessian of a zero-convex and path-stable problem has strictly positive
eigenvalues for \emph{all} feasible interior points.  This is a stronger condition than the usual second-order sufficiency condition that the reduced
Hessian be positive definite at stationary points of the Lagrangian. The stronger condition is used to prove uniqueness in Theorem \ref{prop:uniqueness}.
\end{remark}

In order to describe the sufficient conditions that ensure the existence of no more than one solution, we will need to recall the notion of \emph{path-connected} set:

\begin{definition}[e.g., \cite{mendelson1962introduction}]
A set $X$ is \emph{path-connected} if for any $x_1, x_2 \in X$, there exists a continuous function $f: [0,1] \to X$
such that $f(0)=x_1$ and $f(1)=x_2$.
\end{definition}

The following Theorem describes sufficient conditions for our parametric optimization problem to have at most one solution, so that
any solution must be the global optimum:
\begin{theorem}\label{prop:uniqueness}
Consider the parametric barrier problem (\ref{def:parametric-barrier-optimization-problem}). Assume that
\begin{enumerate}
\item the problem is zero-convex;
\item the problem is path-stable;
\item the set of feasible interior points is path-connected.
\end{enumerate}
Then for any $\mu > 0$ and any $\theta \in [0,1]$, the barrier problem (\ref{def:parametric-barrier-optimization-problem}) has at most one unique solution. 
This unique solution is its global optimum.
\end{theorem}
\begin{proof}
Fix $\mu > 0$ and $\theta \in [0,1]$. Suppose that the system of equations $F_\mu(x,\lambda,\theta)~=~0$ admits two different solutions.
From Lemma \ref{lemma:posdef} and the second order sufficiency conditions \cite[e.g.]{fletcher2013practical,geiger2013theorie}, it follows that these are strict local minima.  
Connect the two strict local minima with a continuous path of feasible points.
The objective function is continuous, and the image of the path forms a compact set.  Therefore, by the extreme value theorem \cite{rudin1964principles}, 
the objective function must attain a maximum somewhere on the path.  Since the endpoints of the path are strict local minima,
the local maximum must lie in the interior of the path, and it must yield an objective value exceeding the objective values for the two local minima. But the existence of such a local maximum on the path would contradict the positive definiteness of the reduced Hessian matrix
implied by Lemma \ref{lemma:posdef}.
\end{proof}

For problems that satisfy the sufficient conditions of Theorem \ref{prop:uniqueness}, the \emph{central path} \cite{renegar2001mathematical} of the interior point method is uniquely defined.

To solve an optimization problem (\ref{def:optimization-problem-bis}) that has an associated parametric optimization problem (\ref{def:parametric-barrier-optimization-problem})
that satisfies the conditions of Theorem \ref{prop:uniqueness},
we provide a feasible starting point, and then let the interior point method implementation
drive $\mu \to 0$, while ensuring that at every iteration $\mu > 0$. \cite{wright1997primal, harris1987shorter, Wachter2006}.
For every $\mu$, local search is used to solve the associated barrier problem.

Note that the requirement to start with a feasible starting point is not restrictive.  This starting point can be obtained
either from a simulation computation prior to the optimization run, or alternatively, it can be obtained using a continuation algorithm.
The continuation algorithm in turn may be seeded using the solution of the problem at $\theta=0$, which is a convex problem that may be solved using 
an interior point method without a starting point \cite{renegar2001mathematical}.

\begin{remark}
Even if the search space is not path-connected, zero-convex and path stable optimization problems can
be solved to local optimality using the continuation method.  Due to path-stability, the homotopy path
cannot bifurcate, and hence the local optimum of the non-convex optimization problem is uniquely defined
by the global optimum of the convex problem at $\theta=0$.  
\end{remark}

In applications to concrete classes of problems, most of the difficulty resides in the proof of the linear independence of the constraint gradients throughout the deformation process.  While such analyses may be lengthy, it is worth pointing out that the desired linear independence typically is a necessary condition for the numerical integration of the dynamics.  Results on linear independence are therefore typically readily available for the non-linear dynamics at $\theta=1$. Such results are typically phrased in terms of the non-singularity of the Jacobian of the discretized 
dynamics \cite[e.g]{casulli1998conservative}, and only need to be adjusted to account for the parametric deformation of the model.  

The power of the path-stable continuation approach resides in the fact that the positivity of the eigenvalues of the reduced Hessian is directly carried over from the convex relaxation. In this way, we bypass the need to analyze the complete spectrum of the reduced Hessian of the non-convex problem. Since the direct analysis of the spectrum is, in general, hard, authors typically sidestep the issue by approximating non-convex problems using polynomial hierarchies \cite[e.g]{ghaddar2017polynomial} or linear models \cite[e.g]{horvath2018categorization}.  In this paper we show how path-stable continuation opens the road to \emph{direct} global optimization of a class of PDE-constrained non-convex optimization problems.

In the next section, we look at an application to problems constrained by the Saint-Venant equations. 
Applications to other PDE, such as the Hazen-Williams equations for pressurized flow, or the isothermal Euler equations governing the flow of gas in pipes \cite{hante2017challenges}, would follow a similar scheme.

\section{Application to the shallow water equations}\label{sec:application}

In the present section we prove that, under mild technical conditions, the sufficient conditions for global optimality hold when considering the one-dimensional shallow water equations.
We also illustrate how these technical conditions are readily satisfied in practice, and therefore how water optimization problems may be formulated with a homotopy free of bifurcations.  

In Subsection \ref{sec:dynamics}, we describe the one-dimensional shallow water equations, linear approximations to the equations, and explain their discretization.
In Subsection~\ref{sec:analysis}, we prove that our discretization of the shallow water equations satisfies the notions of zero-convexity and path-stability, and can be included in an optimization problem in such a way that the search space remains path-connected. The proofs use well-known results from real analysis, linear algebra, and general topology.
In Subsection \ref{sec:example}, we consider a numerical example where we solve an optimization problem subject to the discretized shallow water equations.

\subsection{The shallow water equations}\label{sec:dynamics}

In the present section, we summarize the one-dimensional shallow water equations.  These are also known as the \emph{Saint-Venant} equations \cite{de1871theorie}.

The shallow water equations describe situations in fluid dynamics where the horizontal length scale is large compared to the water depth.  The Saint-Venant equations are given by the momentum equation
\begin{equation}\label{eq:saint-venant}
\frac{\partial Q}{\partial t} + \frac{\partial}{\partial x}\frac{Q^2}{A} + gA\frac{\partial H}{\partial x} + g\frac{Q|Q|}{A R C^2}=0,
\end{equation}
with longitudinal coordinate $x$, time $t$, discharge $Q$, water level $H$, cross section $A$, hydraulic radius $R:=A/P$, wetted perimeter $P$, Ch\'{e}zy friction coefficient $C$, gravitational constant $g$, and by the mass balance (or continuity) equation
\begin{equation}\label{eq:mass-balance}
\frac{\partial Q}{\partial x} + \frac{\partial A}{\partial t} = 0.
\end{equation}

The cross section $A: H \mapsto A(H)$ and wetted perimeter $P: H \mapsto P(H)$ are twice continuously differentiable functions such that for all $H$, it holds that $A > 0$, $dA / dH > 0$, $d^2 A / dH^2 \geq 0$, $P > 0$ and $dP / dH > 0$.
The conditions $dA / dH > 0$ and $dP / dH > 0$ state that the cross sectional area and the wetted perimeter are strictly increasing functions
of the water level, and the condition $d^2 A / dH^2 \geq 0$ states that the channel width $dA / dH$ is a non-decreasing function of the water level.

For our purposes, we require that these functions must be defined for \emph{all} $H$, i.e., including $H < H_b$.  We do this
in order to be able to produce ``imaginary'' solutions where $H < H_b$.  This construction improves the topology 
of the search space (cf. Corollary \ref{lemma:path-connected}), eventually leading to the global
optimality result.  Note that in Subsection \ref{sec:analysis}, we will discuss
ways to impose ``soft'' constraints on water levels.

A simple approach to building such functions is to let $A$ and
$P$ approach their natural $H=H_b$ values asymptotically as $H \to -\infty$, and to extrapolate smoothly 
as $H \to \infty$.  Such functions can be set up by fitting a cubic B-Spline \cite{de1972calculating} 
to bathymetry data over the range of physically feasible water levels (perturbing $A(H_b)$ away from zero if necessary), and flanking the B-Spline fits 
with the appropriate smooth extrapolations.

In practice, however, it is typically not required to set up such extrapolations.  In Subsection \ref{sec:analysis}, we will show that
if a solution is found to the optimization problem, this solution must be a globally optimal solution.  It is easy to check,
a posteriori, whether a solution satisfies $H > H_b$ everywhere.  If a soft lower bound is set on $H$ (cf. Subsection \ref{sec:analysis}), an optimum
with $H \leq H_b$ can only arise if channel reaches fall dry due to a lack of water.

In the remainder of this paper, we will restrict our attention to smooth, subcritical solutions of the Saint-Venant equations.  
Correct handling of supercritical phenomena requires additional attention as discussed in, e.g., \cite{Stelling2003}.

\subsection{A linear approximation to the shallow water equations}

The mass balance equation (\ref{eq:mass-balance}) and the momentum equation (\ref{eq:saint-venant}) are both, in general, nonlinear.
The inclusion of these equations as equality constraints in an optimization problem 
results in a problem that is non-convex.
In the present section, we develop linear approximations of these equations.

Starting from a globally optimal solution of the convex optimization problem subject to the linear approximation of the Saint-Venant equations, we may use the continuation method to find a solution of the nonlinear problem.
In Subsection \ref{sec:analysis} we will show that this solution is the only solution, and hence the global optimum of the barrier formulation of the nonlinear problem.

We start by defining a \emph{nominal} water level $\overline{H}$.  Typically, this level would correspond to a mean water level or a level setpoint.
We obtain a linear approximation to the mass balance equation by considering a rectangular cross section with nominal width $\overline{w}:=(dA/dH)(\overline{H})$:
\begin{equation}\label{eq:linear-mass-balance}
  \frac{\partial Q}{\partial x} + \overline{w}\frac{\partial H}{\partial t}=0.
\end{equation}

We now turn our attention towards the momentum equation.  The water level gradient $\partial H/\partial x$ is a primary driver of the flow and the direction thereof.  In order to maintain directional variability in the linear model we, therefore, need to retain the water level gradient as-is. 
Hence, we linearize the pressure term around $\partial H / \partial x=0$ and $A=\overline{A}$ with $\overline{A}:=A(\overline{H})$, and obtain the linearized pressure term $g\overline{A}\partial H / \partial x$.

The quadratic nature of the friction term cannot be maintained in a linear model.  We apply the nominal cross section $\overline{A}$, which results in a nominal hydraulic radius $\overline{R}=\overline{A}/\overline{P}$ with $\overline{P}:=P(\overline{H})$, and linearize around $Q=\overline{Q}$.  The choice of $\overline{Q}$ does \emph{not} express a preferred flow direction due to the presence of the absolute value function. 

The convective acceleration term $\partial \left(Q^2/A\right) / \partial x$ is of limited significance in subcritical river wave propagation scenarios \cite{montero2013simplified},
and it turns out that we can show path-stability if we leave it out of the linear approximation.  This is the same approximation that is used to derive the so-called \emph{inertial wave} equations.
We obtain the following linear approximation to the momentum equation:
\begin{equation}\label{eq:linear-inertial-wave}
\frac{\partial Q}{\partial t} + g\overline{A}\frac{\partial H}{\partial x} + g\frac{Q|\overline{Q}|}{\overline{A} \overline{R} C^2}=0.
\end{equation}


\subsection{Semi-implicit discretization on a staggered grid}
\label{sec:homotopic-discretization}

We discretize our hydraulic equations on a staggered grid and semi-implicitly in time, analogous to the approaches set out in, e.g., \cite{casulli1990semi, casulli1998conservative, Stelling2003}.  The pressure term is discretized semi-implicitly in the sense that the levels are evaluated at time $t_j$, whereas the cross section is evaluated at time $t_{j-1}$.  The friction term is discretized semi-implicitly in the sense that the discharge $Q$ is evaluated at time $t_j$, whereas the cross section and hydraulic radius are evaluated at time~$t_{j-1}$.  The convective acceleration term is discretized explicitly in time.
The von Neumann stability of such semi-implicit discretizations is analyzed in, e.g., \cite{casulli1994stability}.

In the following, we will refer to those variables which lie between two other hydraulic variables as \emph{interior variables}.  All other hydraulic variables are referred to as \emph{boundary variables}.  The staggered grid, and the distinction between interior and boundary variables, is illustrated in Figure~\ref{fig:grid}. 

\begin{figure}[!t]
\centering
\begin{tikzpicture}
\draw (0,0) -- (4.5,0);
\foreach \x in {1,2,3,4,5}
    \draw (\x cm - 1 cm,1 pt) -- (\x cm - 1 cm,-1 pt) node[anchor=north] { \begin{tabular}{c} $H_\x$ \end{tabular} };
\foreach \x in {1,2,3,4}
    \draw (\x cm - 0.5 cm,1 pt) -- (\x cm - 0.5 cm,-1 pt) node[anchor=south] {\begin{tabular}{c} $Q_\x$ \end{tabular}};
\draw (4.5 cm,1pt) -- (4.5 cm,-1pt) node[anchor=south] {\begin{tabular}{c} $Q_5$ \end{tabular}};
\draw[<->] (0,1 cm) -- (0.5 cm,1 cm) node[anchor=south] {$\Delta x$} -- (1 cm, 1 cm);
\draw[->] (0,-0.75 cm) -- (4.5 cm,-0.75 cm) node[anchor=west] {$x$};
\end{tikzpicture}
\caption{Staggered grid with an upstream level boundary and a downstream flow boundary. Here, $H_1$ and $Q_5$ are boundary variables, whereas all other variables are internal.}
\label{fig:grid}
\end{figure}
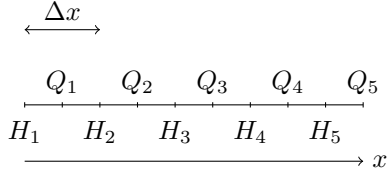

Throughout the paper we assume, without loss of generality, that the grid nodes are numbered as in Figure \ref{fig:grid}. That is, every interior variable $H_i$ has the variables $Q_{i-1}$ and $Q_i$, respectively, to its left and to its right. Such variables exist by construction. Similarly, any interior variable~$Q_i$ has the variable~$H_i$ to its left and $H_{i+1}$ to its right. 

We now introduce the homotopy parameter $\theta$ interpolating between the linear and nonlinear equations.  Following interpolation of the linear and nonlinear mass balance equations, (\ref{eq:linear-mass-balance}) and (\ref{eq:mass-balance}), respectively, and discretization on our staggered grid, we obtain the discretized homotopic mass balance equation
\begin{align}\label{eq:discretized-mass-balance}
c_{i,j} &:= \frac{Q_{i}(t_{j})-Q_{i-1}(t_{j})}{\Delta x} \\
& + \theta \frac{A_i(H_i(t_{j})) - A_i(H_i(t_{j-1}))}{\Delta t} + (1 - \theta) \overline{w} \frac{H_i(t_{j}) - H_i(t_{j-1})}{\Delta t}  = 0 \nonumber \\
& \quad \forall i \in I_H \quad \forall j \in \{1,\ldots,T\} \nonumber
\end{align}
with the index set $I_H$ such that every $H_i$, $i \in I_H$, is an interior variable.  This is a mildly nonlinear, mass-conservative formulation
as in \cite{casulli1998conservative}.

We now turn our attention to the momentum equation.  Interpolating between the linear
and nonlinear momentum equations 
(\ref{eq:linear-inertial-wave}) and~(\ref{eq:saint-venant}), respectively, and discretizing on our staggered grid, we obtain
the discretized homotopic momentum equation
\begin{align}\label{eq:discretized-momentum}
d_{i,j} & := \frac{Q_i(t_{j}) - Q_i(t_{j-1})}{\Delta t} + \theta e_{i,j} \\ 
& + g\left(\theta A_{i+\frac{1}{2}}(t_{j-1}) + (1- \theta) \overline{A}\right) \frac{H_{i+1}(t_{j}) - H_{i}(t_{j})}{\Delta x}  \nonumber \\
& + g\left( \theta \frac{P_{i+\frac{1}{2}}(t_{j-1}) \abs Q_i(t_{j-1})}{A_{i+\frac{1}{2}}(t_{j-1})^2} + (1-\theta) \frac{\overline{P} \abs \overline{Q}}{\overline{A}^2} \right) 
\frac{Q_i(t_j)}{C_i^2} = 0 \nonumber \\
& \quad \forall i \in I_Q \quad \forall j \in \{1,\ldots,T\} \nonumber
\end{align}
with 
\begin{eqnarray*}
A_{i+\frac{1}{2}}(t_j) & := & \frac{1}{2} \left( A_i(H_i(t_j)) + A_{i+1}(H_{i+1}(t_j)) \right); \\
P_{i+\frac{1}{2}}(t_j) & := & \frac{1}{2} \left( P_i(H_i(t_j)) + P_{i+1}(H_{i+1}(t_j)) \right),
\end{eqnarray*}
convective acceleration $e_{i,j}$, and the index set $I_Q$ such that every $Q_i$, $i \in I_Q$, is an interior variable.  The parameter $C_i$ indicates the local friction coefficient, and $H^b_{i}$ indicates the local bottom level.  In order to avoid singular derivatives, we use 
\[
\abs x:=\sqrt{x^2 + \varepsilon},
\]
where $\varepsilon$ is a small constant, as a smooth approximation for $|x|$.  Note that $\varepsilon$ can be taken arbitrarily small, thereby approximating the absolute value function to arbitrary accuracy.

Note that we have used a single set of constant nominal values $\overline{w}$, $\overline{A}$, $\overline{P}$, and $\overline{Q}$ for the entire reach.  
This is sufficient for the development of the theory.

The convective acceleration term $e_{i,j}$ must be discretized explicitly in time in order to be able to prove path-stability.  In the following, we consider
a finite difference approximation to the convective acceleration term, in which the finite differences are taken in the upstream
direction (a so-called \emph{upwind} scheme).  In order to ensure a smooth formulation regardless of the flow direction, we replace the Heaviside function with a logistic function,
finally obtaining
\begin{align*}
e_{i,j} := \sheavi(Q_i(t_{j-1})) \frac{2Q_i(t_{j-1})}{A_{i+\frac{1}{2}}(t_{j-1})} \frac{Q_i(t_{j-1})-Q_{i-1}(t_{j-1})}{\Delta x} \\ 
+ \left(1 - \sheavi(Q_i(t_{j-1}))\right) \frac{2Q_i(t_{j-1})}{A_{i+\frac{1}{2}}(t_{j-1})} \frac{Q_{i+1}(t_{j-1})-Q_{i}(t_{j-1})}{\Delta x} \\
- \frac{Q_i(t_{j-1})^2}{A_{i+\frac{1}{2}}(t_{j-1})^2} \frac{A_{i+1}(H_{i+1}(t_{j-1}))-A_i(H_i(t_{j-1}))}{\Delta x},
\end{align*}
with the logistic function
\[
\sheavi(x):= \frac{1}{1+e^{-Kx}},
\]
and steepness factor $K > 0$.  Note that $K$ can be taken arbitrarily large, thereby approximating the Heaviside function to arbitrary accuracy.

\subsection{Homotopy convergence analysis}\label{sec:analysis}

We consider a numerical optimal control problem subject to the dynamics (\ref{eq:discretized-mass-balance}) - (\ref{eq:discretized-momentum}) imposed as equality constraints between flow variables $Q$ and water level variables $H$.
Let $F_{\mu}(x,\lambda,\theta)=0$, where the vector $x$ contains the variables $Q$, $H$ denote the primal equation system (\ref{eq:primal}) corresponding to this optimization problem.
In particular, we denote with $x_{hyd}$ the vector of the interior hydraulic variables. 

In the following, we will show zero-convexity, path-stability, and path-connectedness of the search space for this type of problem, provided that the following assumptions hold:

\begin{itemize}
\item [BND] None of the interior hydraulic variables are bounded. All free boundary variables have both a lower bound as well as an upper bound such that the lower bound is strictly less than the upper bound.
\item [ICO] Initial values $Q_i(t_0)$ and $H_i(t_0)$ are provided and replaced into the model so that the variables at $t_0$ are no longer included in the optimization problem.
\item [HBC] Any water level boundary conditions are fixed, i.e., if $H_i$ is a water level boundary, then $H_i(t_j)=v_j$ for some time series $\{v_j\}_{j \in \{0,\ldots,T\}}$.  Furthermore, the values $v_j$ are replaced into the model so that the variables $H_i(t_j)$ are no longer included in the optimization problem.
\item [QBC] There is at least one free flow boundary condition.  Any two free flow boundary conditions must have at least one interior flow variable situated in between.
\item [OBJ] The objective function is twice continuously differentiable and convex.
 \end{itemize}

Condition BND is trivially satisfied.  If needed, terms may be included in the objective function
that penalize deviation from desired flow and level ranges.  The condition BND is required to ensure
that the search space remains path-connected.

The condition ICO may be satisfied by providing a complete initial state, 
possibly computed using a state estimation algorithm prior to the optimization run. 

The condition HBC, requiring the water level variables at the boundaries to have fixed values, is hardly restrictive.
A downstream water level variable only occurs in the momentum equation for the adjacent flow variable.
A free downstream level therefore translates to a ``free'' downstream discharge.
The downstream level variable may therefore be omitted, resulting in the adjacent flow variable becoming the new free
boundary variable.  There is no requirement for $Q$ boundaries to be fixed.

The condition QBC is trivially satisfied.  If two free flow boundaries would not have an interior flow variable
situated in between, then we would only be imposing the continuity equation, but not the momentum equation.

Conditions ICO, HBC, and QBC ensure that the constraints remain linearly independent for all $\theta \in [0,1]$. These conditions therefore
help ensure (together with OBJ) that we stay clear of bifurcations.

The condition OBJ states that the objective of optimization problem must be convex.
This is not restrictive in the sense that all standard convex objectives, such as minimization in the $1$, $2$, or $\infty$ norms, are allowed.


The remainder of this section is dedicated to proving that, under the assumptions mentioned above, the problem is zero-convex (Proposition~\ref{lemma:zero-convex}), path-stable (Proposition~\ref{thm:main-result}) and its feasible solutions are path-connected (Corollary~\ref{lemma:path-connected}). These three properties combined will allow us to deduce that the non-convex optimization problem has a unique global optimum (Theorem~\ref{thm}).
Moreover, as is discussed in the following Remark, this methodology is able to find all solutions of interest.

\begin{remark}\label{remark:degeneracy}
Suppose that the original non-convex optimization problem, prior to its transformation to a barrier formulation, has a solution. Then this solution is of one of the following two types:
\begin{enumerate}
\item an interior point; or
\item a point with any number of flow boundary bounds active.
\end{enumerate}
Points of type (1) can be reached by a sequence of interior points such that the objective function values
of the original and the barrier problems converge as $\mu \to 0$.  Points of type (1) therefore cannot obtain lower objective values
than those reached using the interior point method.

Points of type (2) can also be reached by a sequence of interior points.  
By Lemma~\ref{lemma:span}, and the implicit function theorem, there exists a neighborhood around the boundary variables of the solution  
for which the constraint manifold is defined (here we temporarily disregard the active bounds, which are arbitrary from the point of view of the dynamics).  This means that we can perturb away from the bounds into the interior,
and construct a sequence of interior points that converges to the solution.  Therefore the same reasoning as for points of type (1) applies.
\end{remark}

\begin{proposition}\label{lemma:zero-convex}
Assume OBJ.  Then the optimization problem $F_{\mu}(x,\lambda,\theta)=0$ is zero-convex.
\end{proposition}

\begin{proof}
The convexity of the optimization problem at $\theta=0$ is implied by OBJ, and the parametric definitions of the hydraulic constraints (\ref{eq:discretized-mass-balance}) - (\ref{eq:discretized-momentum}),
which are linear for $\theta=0$.
\end{proof}

\begin{lemma}\label{lemma:span}
Assume BND, ICO, and HBC. Then the gradients of the hydraulic constraints (\ref{eq:discretized-mass-balance}) - (\ref{eq:discretized-momentum}) form a basis of the space of the interior $Q$ and $H$ variables.
\end{lemma}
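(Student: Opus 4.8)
The plan is to show that the Jacobian of the hydraulic constraints (\ref{eq:discretized-mass-balance})--(\ref{eq:discretized-momentum}), restricted to the interior $Q$ and $H$ columns, is a square nonsingular matrix; linear independence of its rows together with the matching dimension then gives the basis claim. First I would verify the count: at each time level $j\in\{1,\dots,T\}$ there is exactly one mass balance $c_{i,j}$ per interior water level ($i\in I_H$) and one momentum equation $d_{i,j}$ per interior discharge ($i\in I_Q$), so the number of constraints equals the number of interior $Q$ and $H$ variables. By ICO the variables at $t_0$ are eliminated, and each level-$j$ constraint involves only variables at $t_j$ and $t_{j-1}$; ordering rows and columns by time therefore makes the Jacobian block lower bidiagonal. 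Hence it is nonsingular if and only if every diagonal block $M_j$, the Jacobian of the level-$j$ constraints with respect to the level-$j$ interior variables, is nonsingular, and the problem reduces to a single time step.

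Next I would expose the structure of $M_j$ in the $2\times2$ block form with rows and columns grouped into momentum/discharge and mass/level. Since the only level-$j$ water level appearing in $c_{i,j}$ is $H_i(t_j)$, the mass/level block equals $(w/\Delta t)\,\mathrm{Id}$. Likewise the only level-$j$ discharge in $d_{i,j}$ is $Q_i(t_j)$, because the friction cross section and perimeter are evaluated at $t_{j-1}$, so the momentum/discharge block is diagonal, with entries $1/\Delta t$ plus a friction derivative that is nonnegative whenever $A_{i-\frac12},P_{i-\frac12}>0$; the hypothesis $H>H_b$, together with $H(t_0)>H_b$ from ICO, guarantees positivity of these cross sections, so these diagonal entries are bounded below by $1/\Delta t>0$. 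Both diagonal blocks being nonsingular, I would eliminate the level block and study the Schur complement $S=\partial d/\partial Q-(\Delta t/w)\,(\partial d/\partial H)(\partial c/\partial Q)$ over the discharge nodes; then $M_j$ is nonsingular iff $S$ is.

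The main obstacle is controlling the coupling blocks $\partial d/\partial H$ and $\partial c/\partial Q$, whose entries are not a priori sign-definite and could in principle destroy invertibility. The resolution is a sign computation that exploits $H>H_b$. Writing $A_{i-\frac12}=\tfrac{w}{2}(H_i+H_{i-1}-2H_b)$ and differentiating the pressure term, the cancellations leave a manifestly positive expression for $\partial d_{i,j}/\partial H_{\mathrm{down}}$ and a manifestly negative one for $\partial d_{i,j}/\partial H_{\mathrm{up}}$ precisely because $H>H_b$; meanwhile the mass balance contributes the discrete divergence $\pm1/\Delta x$, with $+$ on the downstream discharge and $-$ on the upstream one. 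Consequently, for every grid edge the product of the matching momentum/level and mass/discharge entries is negative, so each eliminated cross term adds a strictly positive amount to the diagonal of $S$ equal in magnitude, since $|\partial c/\partial Q|=1/\Delta x$ exactly, to the off-diagonal entry it induces. Summing over the at most two interior neighbours of each discharge node, the surplus left on the diagonal is exactly the momentum/discharge entry $\ge1/\Delta t>0$, so $S$ is strictly row diagonally dominant with nonzero diagonal and hence nonsingular by Gershgorin. This proves $M_j$ nonsingular for all $j$, the full Jacobian nonsingular, and the projected constraint gradients a basis. Boundary handling, namely fixed level boundaries under HBC and given flow boundaries, only deletes columns and can merely strengthen the dominance, so it does not affect the argument.
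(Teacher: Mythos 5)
Your proof is correct, and for the decisive step it takes a genuinely different (and in fact more rigorous) route than the paper. The reduction in time is the same in substance: the paper restricts Equation~(\ref{eq:sum}) to the variables $x_T$, then $x_{T-1}$, and so on, which is exactly your observation that the time-ordered Jacobian is block lower bidiagonal, so everything reduces to the square single-time-step block $M_j$. Where you diverge is in proving $M_j$ nonsingular. The paper orders rows and columns so that $M_j$ is tridiagonal with nonzero entries exactly on the band and then asserts that such a matrix ``clearly'' has full rank; that assertion is not true of tridiagonal matrices in general (the $2\times 2$ all-ones matrix is a counterexample), so the paper's proof has a gap at precisely this point. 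Your Schur-complement argument closes it: the mass/level block is $(w/\Delta t)\,\mathrm{Id}$ and the momentum/discharge block is $\diag(\tau_{i,j})$ with $\tau_{i,j}\ge 1/\Delta t$ (positivity of the friction derivative using $\abs{\cdot}>0$ and $A_{i-\frac12}(t_{j-1}),P_{i-\frac12}(t_{j-1})>0$, which is where ICO and $H>H_b$ enter), and eliminating the level block yields $S=\diag(\tau)+\tfrac{\Delta t}{w\Delta x}(\text{weighted graph Laplacian on the }Q\text{ nodes})$, which is strictly row diagonally dominant with surplus $\tau_{i,j}>0$. The sign computation you rely on ($\partial d/\partial H_{\mathrm{down}}>0$, $\partial d/\partial H_{\mathrm{up}}<0$, matched against $\pm 1/\Delta x$ in the discrete divergence, with exactly matching magnitudes between each induced off-diagonal entry and its diagonal counterpart) is correct, with one caveat: it requires the divergence stencil in (\ref{eq:discretized-mass-balance}) and the gradient stencil in (\ref{eq:discretized-momentum}) to be transposed incidence patterns of each other, which is the physically intended staggered-grid pairing even though the paper's literal index labels in the two equations are not mutually consistent; you should state that convention explicitly. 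What your approach buys is a quantitative nonsingularity certificate (a lower bound $1/\Delta t$ on the dominance margin, hence on conditioning, uniform in $\theta$ and $\mu$), at the cost of a slightly longer computation; the paper's approach is shorter but, as written, incomplete.
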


\begin{proof}
Since the number of interior hydraulic variables equals the number of hydraulic constraints, the statement of the lemma is equivalent to showing that the gradients of the hydraulic constraints are linearly independent over the space of the interior $Q$ and $H$ variables; i.e., that the equation 
\begin{equation}\label{eq:sum}
\sum_{i \in I_H,\, j \in \{1,\ldots,T\}} \alpha_{i,j} \frac{\partial c_{i,j}}{\partial x_{hyd}} + \sum_{i \in I_Q,\, j \in \{1,\ldots,T\}} \beta_{i,j} \frac{\partial d_{i,j}}{\partial x_{hyd}}= 0
\end{equation}
is satisfied only if $\alpha_{i,j}=\beta_{i,j}=0$ for all $i \in I_H$ and $i \in I_Q$, respectively, and all $j \in \{1,\ldots, T\}$.

Let $x_{i,j}$ denote the subvector of the interior variables at interior discretization point $i$ and time step $t_j$, i.e., 
$$
x_{i,j} := \left( H_{i}(t_{j}),H_{i+1}(t_{j}), Q_{i-1}(t_{j}), Q_{i}(t_{j}) \right).
$$

For the sake of our proof, we only need to consider the partial derivatives of $c_{i,j}$ and $d_{i,j}$ restricted to $x_{i,j}$. The only non-zero terms are then equal to:
\begin{align*}
\frac{\partial c_{i,j}}{\partial x_{hyd}}\biggr\rvert_{x_{i,j}}
& = \left( \phi_{i,j}, 0, \frac{-1}{\Delta x},\frac{1}{\Delta x}\right); \\
\frac{\partial d_{i,j}}{\partial x_{hyd}}\biggr\rvert_{x_{i,j}}
 &= \left( -\psi_{i,j}, \psi_{i,j}, 0, \tau_{i,j} \right),
\end{align*}
where the vertical bar indicates restriction to a subvector, and where
\begin{align*}
\phi_{i,j} = & \frac{1}{\Delta t} \left(\theta \frac{\partial A_i}{\partial H_i}(H_i(t_{j})) + (1 - \theta) \overline{w} \right); \\
\psi_{i,j}  =  & \frac{g}{\Delta x} \left(\theta A_{i+\frac{1}{2}}(t_{j-1}) + (1 - \theta) \overline A \right); \\
\tau_{i,j} = & \frac{1}{\Delta t} + \theta  \frac{g}{C_i^2} \frac{P_{i+\frac{1}{2}}(t_{j-1}) \abs Q_i(t_{j-1})}{A_{i+\frac{1}{2}}(t_{j-1})^2}
 + (1-\theta) \frac{g}{C_i^2} \frac{\overline{P} \abs \overline{Q}}{\overline{A}^2}.
\end{align*}
We constructed the functions $A$ and $P$ such that $A > 0$, $\partial A/\partial H > 0$, and $P > 0$ for every $H$.  Furthermore, $\abs Q > 0$ for all $Q$.  Therefore the terms $\phi_{i,j}$, $\psi_{i,j}$ and $\tau_{i,j}$ must be nonzero for every time step $j~\in~\{1,\ldots, T\}$ and for any $i \in I_Q$.


We now proceed to prove that equation (\ref{eq:sum}) admits a unique solution. We first illustrate the reasoning for the simpler case when $T = 1$. For this we want to show that equation
\begin{equation}\label{eq:sum1}
\sum_{i \in I_H} \alpha_{i} \frac{\partial c_{i,1}}{\partial x_{hyd}} + \sum_{i \in I_Q} \beta_{i} \frac{\partial d_{i,1}}{\partial x_{hyd}}= 0,
\end{equation}
is satisfied only if $\alpha_{i}=\beta_{i}=0$ for all $i \in I_H$ and $i \in I_Q$, respectively.

Consider the $(|I_H| + |I_Q|)$-square matrix $M$ obtained by stacking on top of each other the gradients of the hydraulic constraints; i.e., the matrix whose rows are the gradients of the hydraulic constraints. 
The columns of $M$
are indexed by the interior variables $x_{hyd}$ and Equation~(\ref{eq:sum1}) has a unique solution if and only if the rows of $M$ are linearly independent.
When permuting rows such that continuity and momentum equations alternate, and columns such that flow and level variables alternate, $M$ is a tridiagonal matrix having the property that the $(m,n)$-entry of this matrix is nonzero if and only if $|m - n| \leq 1$.
Clearly, such a matrix has full rank and thus its rows are linearly  independent. As stated previously, this is equivalent to showing that Equation~(\ref{eq:sum1}) has a unique solution.



We will now prove the more general statement. 
Equation~(\ref{eq:sum}) is satisfied only if it holds even when we consider only part of its variables; i.e.,  equation 
\begin{align}\label{eq:sumr}
& \sum_{i \in I_H,\, j \in \{1,\ldots,T\}} \alpha_{i,j} \frac{\partial c_{i,j}}{\partial x_{hyd}}\biggr\rvert_{\widetilde x} 
+ \sum_{i \in I_Q,\, j \in \{1,\ldots,T\}} \beta_{i,j} \frac{\partial d_{i,j}}{\partial x_{hyd}}\biggr\rvert_{\widetilde x}= 0
\end{align}
holds for any subvector $\widetilde x$ of $x_{hyd}$.
We will use this simple observation to argue about the $\alpha$'s and $\beta$'s in Equation~(\ref{eq:sum}).

Let $x_T$ denote the subvector of $x_{hyd}$ that contains all the  internal hydraulic variables at time step $T$, i.e., the variables $H_i(t_T), Q_i(t_T)$ for $i \in I_H$ and $i \in I_Q$, respectively. 
As the variables of $x_T$ appear only in the gradients of the constraints $c_{i,T}$, $d_{i,T}$ for $i \in I_H$ and $i \in I_Q$, respectively, the following holds:
\begin{align*}
0 = & \sum_{i \in I_H,\, j \in \{1,\ldots,T\}} \alpha_{i,j} \frac{\partial c_{i,j}}{\partial x_{hyd}}\biggr\rvert_{x_T} 
+ \sum_{i \in I_Q,\, j \in \{1,\ldots,T\}} \beta_{i,j} \frac{\partial d_{i,j}}{\partial x_{hyd}}\biggr\rvert_{x_T} \\
= & \sum_{i \in I_H} \alpha_{i,T} \frac{\partial c_{i,T}}{\partial x_{hyd}}\biggr\rvert_{x_T} + \sum_{i \in I_Q} \beta_{i,T} \frac{\partial d_{i,T}}{\partial x_{hyd}}\biggr\rvert_{x_T}.
\end{align*}
We claim that the above equation has a solution only when all the $\alpha_{i,T}$ and the $\beta_{i,T}$ are equal to zero.
Let $M_T$ be the matrix whose rows are the gradients of the hydraulic constraints $\partial c_{i,T} \rvert_{x_T}$, $\partial d_{i,T} \rvert_{x_T}$ for $i \in I_H$ and $i \in I_Q$, respectively, restricted to the variables $x_T$. That is, $M_T$ is a $(|I_H| + |I_Q|)$-square matrix. 
When permuting rows such that continuity and momentum equations alternate, and columns such that flow and level variables alternate, $M_T$ is a tridiagonal matrix whose ($m,n$)-entry is non-zero if and only if $|m-n| \leq 1$ and, hence, the rows of 
$M_T$ are linearly independent. By construction, this is equivalent to saying that $\alpha_{i,T} = \beta_{i,T} = 0$ for all $i \in I_H$ and $i \in I_Q$, respectively. 

Let $x_{T-1}$ denote the subvector of $x_{hyd}$ that contains all the internal hydraulic variables at time step $T-1$, i.e., the variables $H_i(t_{T-1}), Q_i(t_{T-1})$ for $i \in I_H$ and $i \in I_Q$, respectively. 
Using the fact that the variables of $x_{T-1}$ appear only in the gradient of the constraints $\partial c_{i,T-1},\partial c_{i,T}, \partial d_{i,T-1},\partial d_{i,T}$ for $i \in I_H$ and $i \in I_Q$, respectively, and that all the $\alpha_{i,T}$ and the $\beta_{i,T}$ are equal to zero, we have that:
\begin{align*}
0 = & \sum_{i \in I_H,\, j \in \{1,\ldots,T\}} \alpha_{i,j} \frac{\partial c_{i,j}}{\partial x_{hyd}}\biggr\rvert_{x_{T-1}}
+ \sum_{i \in I_Q,\, j \in \{1,\ldots,T\}} \beta_{i,j} \frac{\partial d_{i,j}}{\partial x_{hyd}}\biggr\rvert_{x_{T-1}} \\
= & \sum_{i \in I_H} \alpha_{i,{T-1}} \frac{\partial c_{i,{T-1}}}{\partial x_{hyd}}\biggr\rvert_{x_{T-1}} 
+ \sum_{i \in I_Q} \beta_{i,{T-1}} \frac{\partial d_{i,{T-1}}}{\partial x_{hyd}}\biggr\rvert_{x_{T-1}}.
\end{align*}
By looking at the square matrix $M_{T-1}$ whose rows are the gradients of the constraints $\partial c_{i,T-1}, \partial d_{i,T-1}$ restricted to the variables $x_{T-1}$, 
we can use the same argument as before to show that all the $\alpha_{i,{T-1}}$ and the $\beta_{i,{T-1}}$ must be equal to zero. 

Repeating the reasoning when considering Equation~(\ref{eq:sumr}) for the internal hydraulic variables at time step $T-2$, then  $T-3$ and so on, we can conclude that all the $\alpha_{i,j}$ and $\beta_{i,j}$  in equation (\ref{eq:sum}) must be equal to zero.  Here, we have used the fact
that the initial condition is fully specified (ICO).  This concludes the proof.
\end{proof}

\begin{remark}
Lemma~\ref{lemma:span} crucially depends on the semi-implicit discretization of the pressure and friction terms.  If the these terms were discretized fully implicitly,
then particular combinations of $H$ and $Q$ would lead to vanishing $\psi_{i,j}$ and hence to singular points.
In the same vein, vanishing $\psi_{i,j}$ and $\tau_{i,j}$ introduce singular points when
considering a time-implicit discretization of the convective acceleration term.
\end{remark}

\begin{remark}
Other equations that relate a headloss $\Delta H_i(t_{j}):=H_{i}(t_{j}) - H_{i+1}(t_{j})$ to a flow $Q_i(t_{j})$ may be used instead of the momentum equation (\ref{eq:discretized-momentum}),
as long as the partial derivatives to $H_{i}(t_j)$, $H_{i+1}(t_j)$, and $H_i(t_j)$, and to no other interior hydraulic variables at time step $t_j$, are nonzero.  This covers typical hydraulic structures such as pumps, turbines, and weirs, as long 
as the headloss across the structure is nonzero.  Consider, for example, the semi-implicit version of the hydroelectric turbine equation
\[
P_i(t_j)=g \rho \eta\left(Q_i(t_{j-1}), H_i(t_{j-1}), H_{i+1}(t_{j-1})\right) Q_i(t_j) \Delta H_i(t_j)
\]
with instantaneous generation $P_i(t_j)$, gravitational constant $g$, density $\rho$, and efficiency function $\eta: \mathbb{R}^3 \to (0,1]$.  Following the notation of Lemma~\ref{lemma:span}, 
\[
\psi_{i,j}=-g\rho \eta\left(Q_i(t_{j-1}), H_i(t_{j-1}), H_{i+1}(t_{j-1})\right) Q_i(t_j),
\]
and
\[
\tau_{i,j}=g\rho \eta\left(Q_i(t_{j-1}), H_i(t_{j-1}), H_{i+1}(t_{j-1})\right) \Delta H_i(t_j). 
\]
These are nonzero as long as $Q_i(t_j) \neq 0$ and $\Delta H_i(t_j) \neq 0$, i.e., as long as the structure is operating.  When $Q_i(t_j)=0$, the equation collapses to an interior boundary condition.
The condition $\Delta H_i(t_j)=0$ may occur in the context of pumps and weirs and in that case, a boolean switching variable needs to be introduced as in \cite{baayen2020mixed}.
This illustrates how convex relaxations of the turbine and pump equations \cite[e.g]{horvath2019convex} may be avoided, and how the exact machine characteristics may be used instead.
\end{remark}

\begin{corollary}\label{lemma:path-connected}
Assume BND, ICO, HBC, and QBC. 
Then, for any $\theta \in [0,1]$, the set of feasible interior points 
is non-empty and path-connected.
\end{corollary}

\begin{proof}
Fix $\theta \in [0,1]$. Partition the vector $x$ into interior hydraulic and boundary components,
\[
x:=(x_{hyd},x_{bdy}).
\]
For any $x_{bdy}$, we may integrate the dynamics forwards in time, starting from the initial conditions (ICO).
In \cite{casulli2009high}, Theorem 1 and 2, Casulli shows that the solution of the equations (\ref{eq:discretized-mass-balance}) and
(\ref{eq:discretized-momentum}) exists and is unique.  Note here that our construction is one-dimensional, 
so that for Casulli's element volume $V$ we use 
\[
V=\Delta x\left(\theta A + (1-\theta) \overline{w}H\right).
\]
We also do not treat wetting and drying in same way. By our construction $A > 0$ and $\partial A / \partial H > 0$ for all $H$, so that Casulli's Theorem 1 always holds.
Hence, we have a unique map $g: x_{bdy} \mapsto x_{hyd}$
that is defined for all $x_{bdy}$.
By Lemma \ref{lemma:span} and the implicit function theorem \cite{rudin1964principles}, this function $g$ is locally continuous,
hence continuous everywhere\footnote{If we would have used a non-mass-conservative discretization of the continuity equation, with $\partial A/\partial t$ discretized as $\partial A/\partial H\left(H(t_{j-1})\right) \cdot \left(H(t_j) - H(t_{j-1})\right)/\Delta t$, 
the equations (\ref{eq:discretized-mass-balance}) - 
(\ref{eq:discretized-momentum})  would both be linear in the variables at time step $t_j$.  Then, existence, uniqueness, and continuity would follow directly from
Lemma \ref{lemma:span} and the continuity of the matrix inverse in the matrix coefficients.}.

Finally, note that the set of interior boundary variables is convex, hence path-connected.
Therefore the image under $g$ is also path-connected.   Since the interior hydraulic variables are unbounded (BND),
the set of feasible interior points is the direct sum of the boundary variables and their image under $g$. 
Since both sets are path-connected, the set of feasible interior points is also path-connected.
\end{proof}

\begin{lemma}\label{lemma:hessian2}
Assume BND, HBC, and OBJ.
Partition the vector $x$ into interior hydraulic and boundary components,
\[
x:=(x_{hyd},x_{bdy}).
\]
Then the Hessian of the Lagrangian with respect to $x_{bdy}$, $\nabla_{x_{bdy}x_{bdy}}^2 \mathcal{L}_{\mu}(x,\lambda, \theta)$, is positive definite.
\end{lemma}
\begin{proof}
Since we assume all boundary variables to be bounded (BND), the second derivatives of the logarithmic barrier functions with respect to $x_{bdy}$ form a positive definite diagonal matrix.  To this we add the Hessian of the objective function $f$
with respect to $x_{bdy}$, which is positive semi-definite due to the convexity of $f$ (OBJ). 

As the sign of the Lagrange multipliers is not known a-priori, the Hessians of the constraints may have an indefinite contribution to the Hessian of the Lagrangian, potentially resulting in a loss of positive semi-definiteness.
This, however, can only happen for constraints that are nonlinear in $x_{bdy}$. As per (HBC), $H$ boundaries are fixed and hence do not occur as optimization variables.  Free $Q$ boundaries do occur, but only the mass balance equation (\ref{eq:discretized-mass-balance}) and the convective acceleration term in the momentum equation (\ref{eq:discretized-momentum}) depend on boundary $Q$ variables.  Both the mass balance equation and the convective acceleration term are linear in the boundary flow variables, and therefore do not contribute to $\nabla_{x_{bdy}x_{bdy}}^2 \mathcal{L}_{\mu}$.

Hence, the Hessian matrix $\nabla_{x_{bdy}x_{bdy}}^2 \mathcal{L}_{\mu}$ is positive definite.
\end{proof}

\begin{lemma}\label{lemma:hessian3}
Assume BND, HBC, and OBJ. Then the Hessian of the Lagrangian with respect to $x$, $\nabla_{xx}^2 \mathcal{L}_{\mu}(x,\lambda, \theta)$, is nonsingular
on the tangent space $T(x,\theta)$.
\end{lemma}
\begin{proof}
Fix $\mu > 0$ and $\theta \in [0,1]$.  Partition the vector $x$ into interior hydraulic and boundary components,
\begin{equation}\label{eq:partition1}
x:=(x_{hyd},x_{bdy}).
\end{equation}
With respect to the partitioning (\ref{eq:partition1}) the Hessian matrix $\nabla_{xx}^2 \mathcal{L}_{\mu}(x,\lambda, \theta)$ has the block form
\[
\begin{pmatrix}
\nabla_{x_{hyd}x_{hyd}}^2 \mathcal{L}_{\mu} & \nabla_{x_{hyd}x_{bdy}}^2 \mathcal{L}_{\mu} \\
\nabla_{x_{bdy}x_{hyd}}^2 \mathcal{L}_{\mu} & \nabla_{x_{bdy}x_{bdy}}^2 \mathcal{L}_{\mu}
\end{pmatrix}.
\]
By Lemma \ref{lemma:hessian2}, the submatrix $\nabla_{x_{bdy}x_{bdy}}^2 \mathcal{L}_{\mu}$ is positive definite,
hence nonsingular.  Using elementary row and column operations multiplied into a matrix $P$, we can transform the Hessian to have the form
\[
P \nabla_{xx}^2 \mathcal{L}_{\mu}(x,\lambda, \theta) P =
\begin{pmatrix}
A & 0 \\
0 & \nabla_{x_{bdy}x_{bdy}}^2 \mathcal{L}_{\mu}
\end{pmatrix}
\]
for some square matrix $A$.  Now, for any element of the tangent space $0 \neq y \in T(x,\theta)$, we have
$$
\nabla_{xx}^2 \mathcal{L}_{\mu}(x,\lambda, \theta) y = P^{-1} (P \nabla_{xx}^2 \mathcal{L}_{\mu}(x,\lambda, \theta) P) P^{-1} y.
$$
Since $P$ only modifies the rows/columns corresponding to the interior hydraulic variables, $P^{-1} y = y$ on the space
of boundary variables.  Since the submatrix
$\nabla_{x_{bdy}x_{bdy}}^2 \mathcal{L}_{\mu}$ is positive definite by Lemma \ref{lemma:hessian2},  it remains to show that $y$ must have at least one nonzero boundary variable coordinate.
This must be so by Lemma \ref{lemma:span}, for otherwise $y$ would not be orthogonal to all constraint gradients.
We have shown that the matrix $\nabla_{xx}^2 \mathcal{L}_{\mu}$ is nonsingular on the tangent space $T(x,\theta)$.
\end{proof}

\begin{proposition}\label{thm:main-result}
Assume BND, ICO, HBC, QBC, and OBJ.
Then the optimization problem $F_{\mu}(x,\lambda,\theta)=0$ is path-stable with respect to the interior point method.
\end{proposition}

\begin{proof}
The result follows directly from an application of Proposition \ref{theorem:tiahrt}, noting a) that Lemma \ref{lemma:hessian3}
proves the nonsingularity of the Hessian of the Lagrangian on the tangent space of the constraint manifold, and b) that Lemma \ref{lemma:span}
proves the linear independence of the constraint gradients.
\end{proof}

Combining the above results for zero-convexity, path-stability, and the path-connectedness of the search space,
we obtain the following uniqueness theorem for optimization problems constrained by the shallow water equations:

\begin{theorem}\label{thm}
Assume BND, ICO, HBC, QBC, and OBJ.  Then the non-convex optimization problem $F_{\mu}(x,\lambda,\theta)=0$ has a unique solution for every $\mu > 0$ and every $\theta \in [0,1]$.
\end{theorem}

\begin{proof}
Existence follows from Corollary \ref{lemma:path-connected}.  Uniqueness follows from Propositions \ref{lemma:zero-convex} and \ref{thm:main-result}, Corollary \ref{lemma:path-connected}, and Theorem \ref{prop:uniqueness}.
\end{proof}

Theorem \ref{thm} implies that the optimization problem $F_{\mu}(x,\lambda,\theta)=0$ can be solved
to global optimality using a continuation method.

\subsection{Numerical example}
\label{sec:example}

We consider a single river reach with 10 uniformly spaced water level nodes and rectangular cross section, an upstream inflow boundary condition provided with a fixed time series, as well as a controllable downstream release boundary condition. 
The grid is illustrated in Figure~\ref{fig:example-grid}, and the hydraulic parameters and initial conditions are summarized in Table~\ref{table:example-specs}. The model starts from steady state: the initial flow rate is uniform and the water level decreases linearly along the length of the channel.

\begin{figure}[!t]
\centering
\begin{tikzpicture}[scale=0.8]
\draw (0,0) -- (10.0,0);
\foreach \x in {0,1,...,10}
    \draw (\x cm,1 pt) -- (\x cm,-1 pt) node[anchor=south] { \begin{tabular}{c} $Q_{\x}$ \end{tabular} };
\foreach \x in {1,2,...,10}
    \draw (\x cm - 0.5 cm,1 pt) -- (\x cm - 0.5 cm,-1 pt) node[anchor=north] {$H_{\x}$};
\end{tikzpicture}
\caption{Staggered grid for the example problem.}
\label{fig:example-grid}
\end{figure}
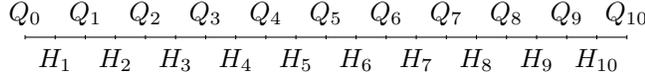

To give a physical context for this problem, suppose this model represents a channel downstream of a reservoir and upstream of an adjustable weir with limited capacity.
The weir is trying to dampen the sudden pulse of water shown in Figure~\ref{fig:results}(b) released by the reservoir.

\begin{table}[!t]
  \caption{Parameters for the example problem.}
  \label{table:example-specs}
  \centering
  \begin{tabularx}{\linewidth}{ l l X}
  \hline
     Parameter & Value & Description \\
  \hline
      $T$ & $72$ & Index of final time step \\
      $\Delta t$ & $600$ s & Time step size \\
      $H^b_{i}$ & $\left( -4.90, -4.92, \ldots, -5.10 \right)$ m & Bottom level\\
      $l$ & $10 000$ m & Total channel length \\
      $A_i(H_i)$ & $50\cdot(H-H^b_i)$ m$^2$ & Channel cross section function \\
      $P_i(H_i)$ & $50+2\cdot(H-H^b_i)$ m & Channel wetted perimeter function \\
      $C_{i}$ & $\left( 40, 40, \ldots, 40 \right)$ m$^{0.5}$/s & Chézy friction coefficient \\
      $\overline{H}$ & $0.0$ m & Nominal level in linear model for entire reach \\
      $\overline{Q}$ & $100$ m$^3$/s & Nominal discharge in linear model for entire reach \\
      $H_i(t_0)$ & $\left( 0.000, -0.025, \ldots, -0.222 \right)$ m & Initial water levels at $H$ nodes \\
      $Q_i(t_0)$ & $\left( 100, 100, \ldots, 100 \right)$ m$^3$/s & Initial discharge at $Q$ nodes \\
      $\varepsilon$ & $10^{-12}$ & Absolute value approximation smoothness parameter \\
      $K$ & $10$ & Convective acceleration steepness factor \\
    \hline
  \end{tabularx}
  
\end{table}

Our optimization objective is to keep the water level at the $H$ nodes at $0$ m above datum:
\[
\min \sum_{i=1}^{10} \sum_{j=1}^T H_i(t_j)^2
\]
subject to the adjustable weir flow constraint
\[
100\, \text{m}^3/\text{s} \leq Q_{10} \leq 200\, \text{m}^3/\text{s}.
\]

The solution to the optimization problem is plotted in Figure~\ref{fig:results}.
By releasing water in anticipation of the inflow using the decision variable $Q_{10}$, the optimization is able to reduce water level fluctuations and keep the water levels close to the target level.

This optimization problem was implemented in Python using the \textsc{CasADi} package \cite{andersson2019casadi} for algorithmic differentiation, and connected to the \textsc{IPOPT} optimization solver \cite{Wachter2006}.  
On a MacBook Pro with 2.9 GHz Intel Core i5 CPU, the example takes approximately $0.4$ s to solve.
The complete source code is available online at \\ \mbox{\url{https://github.com/jbaayen/homotopy-example}}.

\begin{figure}[!t]
\centering
 \includegraphics[scale=0.75]{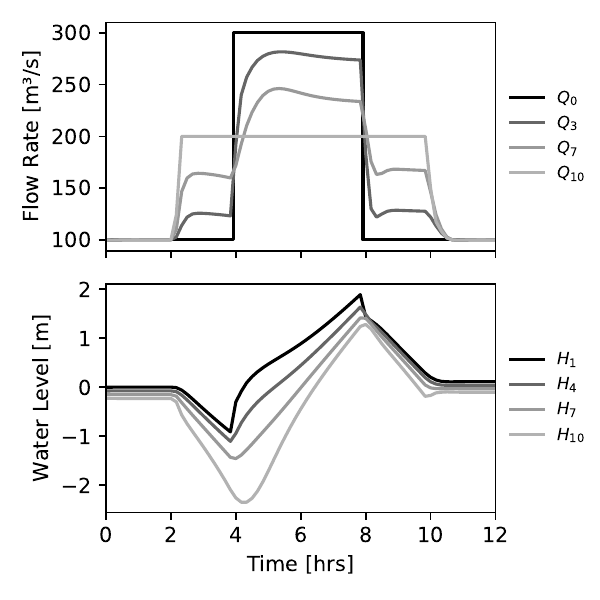}
\caption{Solution of the example optimization problem.}
\label{fig:results} 
\end{figure}

For a real-world case study that also includes other types of objective functions, we refer the reader to \cite{baayen2019overview}.
An extension to mixed-integer decision variables 
is covered in \cite{baayen2020mixed}.

For a numerical benchmark of larger problems, we refer the reader to \cite{Baayen2019-3}. This benchmark covers a comparison with a heuristic from the water resources management literature,
varying numbers of interconnected reaches (up to $16$), as well as varying numbers of discretization points (up to $512$ water level nodes), over an optimization horizon of $48$ hours with varying time step sizes (down to $5$ minutes).

\section{Conclusions}
In the first part of this paper, we provided sufficient conditions under which each local optimum of a non-convex optimization
problem is a global optimum.

The analysis rests on a path-stable, i.e., bifurcation-free, homotopy between a convex relaxation, and the original non-convex optimization problem.  
The path-stable homotopy transfers the non-negativity of the eigenvalues of the reduced Hessian
of the convex relaxation, to the non-convex problem.  In this way, the need to analyze the spectrum
of the reduced Hessian of the non-convex problem directly -- which is, in general, hard -- is bypassed. 

In the second part of the paper, an application is presented to a class of optimization problems subject to the shallow water equations,
that describes flow in rivers and canals.  It is illustrated
how also hydraulic structures, such as hydroelectric turbines in hydropower schemes, may be modelled non-linearly within 
the path-stable continuation framework.

Path-stable, zero-convex optimization problems may be solved to global optimality using local search.
This enables highly efficient numerical implementations, thereby rendering the approach suitable for closed-loop model predictive control
of large-scale cyber-physical systems. This is attested by its practical implementation for closed-loop
control of the primary waterways of the Rijnland water authority in the Netherlands.

\section{Acknowledgments}

The authors would like to thank Jan van Schuppen, Pierre Archambeau, Jakub Mareček, and Dirk Schwanenberg for their comments.

TKI Delta Technology provided part of the funding under projects DEL021 and DEL029.

\bibliographystyle{siamplain}
\bibliography{paper}

\end{document}